\theoremstyle{plain}
\newtheorem{theorem}{Theorem}[section]
\newtheorem{proposition}[theorem]{Proposition}
\newtheorem{lemma}[theorem]{Lemma}
\title{Jungerman ladders and index 2 constructions for genus embeddings of dense regular graphs}
\author[1, 2]{Timothy Sun}\date{}
\affil[1]{\small Department of Computer Science, Columbia University}
\affil[2]{\small Department of Computer Science, San Francisco State University}
\begin{document}

\maketitle

\begin{abstract}
We construct several families of minimum genus embeddings of dense graphs using index 2 current graphs. In particular, we complete the genus formula for the octahedral graphs, solving a longstanding conjecture of Jungerman and Ringel, and find triangular embeddings of complete graphs minus a Hamiltonian cycle, making partial progress on a problem of White. Index 2 current graphs are also applied to various cases of the genus of the complete graphs, in some cases yielding simpler solutions, e.g., the nonorientable genus of $K_{12s+8}-K_2$. In addition, we give a topological proof of a theorem of Jungerman that shows that a symmetric type of such current graphs might not exist roughly ``half of the time.'' 
\end{abstract}

\section{Introduction}

Current graphs are combinatorial tools that describe covering space constructions for graph embeddings. They were first invented in the context of the Map Color Theorem~\cite{Ringel-MapColor}, where they were used to generate triangular embeddings of complete or near-complete graphs. While some of the simpler cases of the Map Color Theorem have since been resolved using other approaches (e.g. a recursive surgery method in Bonnington \emph{et al.} \cite{Bonnington-Exponential} or a block-design approach in Grannell \emph{et al.} \cite{Grannell-SurfaceEmbeddings}), current graphs remain the only known method for constructing genus embeddings of the complete graphs $K_n$ for some residue classes of $n$ modulo 12. 

Current graphs are themselves embedded (labeled) graphs, and the \emph{index} of a current graph refers to the number of faces in the embedding. Our focus here is on \emph{index 2} current graphs. Ringel and Youngs~\cite{RingelYoungs-German} were partially successful in finding genus embeddings of the complete graphs $K_n$ when $n = 12s+2$ or $12s+9$, using two similar families of index 2 current graphs. Unfortunately, their solution worked only for half of all positive integers $s$. Except to solve the single case $n = 14$, index 2 current graphs are absent in the original proof of the Map Color Theorem presented in Ringel~\cite{Ringel-MapColor}. 

Jungerman investigated these same two residue classes using \emph{orientable cascades}, which are index 1 current graphs embedded in a nonorientable surface that generate orientable embeddings. Orientable cascades can be interpreted as index 2 current graphs in orientable surfaces by considering the embedded surface's orientable double cover. For $n = 12s+9$, he encountered the same parity issue \cite[\S 8.4]{Ringel-MapColor}, but eventually found an even simpler solution that worked for all $s \geq 0$ \cite[\S 6.5]{Ringel-MapColor}. In his Ph.D.\ thesis \cite{Jungerman-OrientableCascades}, he explained why the aforementioned approaches failed to find the appropriate current graphs for all $s$ by proving an impossibility result for orientable cascades whose derived embeddings are of complete graphs. We provide a more modern, topological proof of a slight generalization of Jungerman's theorem. 

The primary success story of orientable cascades is Jungerman and Ringel's \cite{JungermanRingel-Octa} construction for triangular embeddings of the octahedral graphs $K_{2m}-mK_2$, $m \not\equiv 2 \pmod{3}$. We resolve the missing cases, the octahedral graphs whose genus embeddings are not triangular, mostly using orientable cascades, as well. Unlike the complete graphs, octahedral graphs do not suffer from the aforementioned parity issue. If we interpret complete graphs of even order and octahedral graphs as Cayley graphs on a cyclic group $\mathbb{Z}_{2m}$, then Jungerman's theorem essentially states that the culprit is the order 2 element---complete graphs have it in their generating set, but octahedral graphs do not. 

Constructing an infinite family of current graphs is a difficult task, so one common technique is to start with a ``ladder'' that can grow arbitrarily long. Jungerman's technique for orientable cascades, which we call a \emph{Jungerman ladder}, can be transferred to index 2 current graphs by simulating an ``orientable double cover'' of just the ladder. For all the remaining graphs that we consider, which include some complete graphs $K_{12s+k}$ and Hamiltonian cycle complements $K_{12s}-C_{12s}$, we find a family of orientable cascades for half of all values $s$, and a different family of index 2 current graphs for the other half.

The results in this paper were discovered prior to the author obtaining a copy of Jungerman's thesis \cite{Jungerman-OrientableCascades}, and much to the chagrin of the author, many of the results presented here regarding embeddings of the complete graphs were already proven there, including the aforementioned impossibility result. Our current graphs for $K_{13}$, $K_{12s+4}$, $s$ even, and $K_{12s+2}-K_2$, $s$ odd, are identical or nearly identical to Jungerman's, but our other constructions and impossibility proof are either simpler than those in the current literature (\cite{Jungerman-KnK2, Jungerman-OrientableCascades, Korzhik-Case8}) or completely new. Finally, as a byproduct of working with embeddings of octahedral graphs, we show, in Appendix \ref{app-k18}, how a genus embedding of $K_{18}$ can be obtained from a current graph of Jungerman and Ringel \cite{JungermanRingel-Octa}.

\section{Graph embeddings in surfaces}

We assume familiarity with graph embeddings and the theory of current graphs. For more information, see Gross and Tucker~\cite{GrossTucker}, especially \S3.2, and Ringel~\cite{Ringel-MapColor}.

Let $G$ be an undirected graph with vertex set $V(G)$ and edge set $E(G)$. We arbitrarily orient each of the edges in $E(G)$ so that each edge induces two arcs $e^+$ and $e^-$ with opposite orientations. We write $E(G)^+$ to denote the set of these arcs. The arcs incident with a given vertex $v \in V(G)$ are the ones directed out of $v$, and a \emph{rotation} of $v$ is a cyclic permutation of its incident arcs. A \emph{(pure) rotation system} of $G$ is an assignment of rotations to each of its vertices. An embedding $\phi\colon G \to S$ of $G$ in some surface $S$ is \emph{cellular} if the complement of its image $S \setminus \phi(G)$ decomposes into a disjoint union of disks $F(G,\phi)$, which we call \emph{faces}. Pure rotation systems are in correspondence with cellular embeddings of graphs in orientable surfaces up to orientation-preserving equivalence of embeddings, where the boundaries of the faces can be traced out from interpreting rotations as, say, clockwise orderings of the arcs with respect to some orientation of the surface $S$. 

A \emph{(general) rotation system} extends the above combinatorial description to nonorientable surfaces by including an \emph{edge signature} $\lambda\colon E(G) \to \{-1, +1\}$. We call an edge \emph{twisted} if it has signature $-1$, and \emph{normal}, otherwise. While traversing a face boundary, if we encounter a twisted edge, the local orientation of the walk becomes reversed, and we choose all future outgoing arcs based on the \emph{counterclockwise} ordering at the vertex until we encounter another twisted edge. In this language, a rotation system is pure if no edge is twisted. Different general rotation systems may be equivalent to one another, and even ones with twisted edges may correspond to an embedding in an orientable surface. A \emph{vertex flip} reverses a vertex's rotation and switches the signature of all of its incident edges. Two rotation systems are said to be equivalent if one can be reached from the other by some sequence of vertex flips.

Given an embedding and an orientation of each of its face boundaries, an edge is said to be \emph{bidirectional} if the two times a face boundary traverses that edge are in opposite directions, and \emph{unidirectional}, otherwise. For an orientable embedding, orienting the face boundaries consistently with some orientation of the surface makes every edge bidirectional.

Embeddings in surfaces are governed by the \emph{Euler polyhedral formula} 
$$|V(G)|-|E(G)|+|F(G,\phi)| = \chi(S),$$
where $\chi(S)$ denotes the \emph{Euler characteristic} of the surface. The Euler characteristic is $2-2g$ for the orientable surface of genus $g$ and $2-k$ for the nonorientable surface of genus $k$. The \emph{minimum orientable (resp. nonorientable) genus} $\gamma(G)$ (resp. $\overline{\gamma}(G)$) of a graph $G$ is the minimum genus over all embeddings in orientable (resp. nonorientable) surfaces. 

If the graph is simple and not $K_2$, then in any embedding, the length of every face is at least 3. Using this fact in conjunction with the Euler polyhedral formula yields the \emph{Euler lower bound} 
$$\gamma(G) \geq \frac{|E(G)|-3|V(G)|+6}{6} \,\,\text{~and~}\,\, \overline{\gamma}(G) \geq \frac{|E(G)|-3|V(G)|+6}{3}$$
on the genus of a graph. Equality is attained when all the faces are triangular, and both inequalities can be sharpened by applying the ceiling function to the right hand side. 

Let $\mathbb{Z}_n$ denote the integers modulo $n$, and let $S$ be a subset of $\{1, 2, \dotsc, \lfloor n/2\rfloor\} \subseteq \mathbb{Z}_n$. We define the \emph{circulant graph} $C(n,S)$ to be the simple graph with vertex set $\mathbb{Z}_n$ and edge set $\{(u,v) \mid |u-v| \in S\}$. In this paper, we consider the following families of circulant graphs:
\begin{itemize}
\item The \emph{complete graphs} $K_n \cong C(n,\{1,\dotsc,\lfloor n/2\rfloor\})$.
\item The \emph{octahedral graphs} $O_{2m} \cong C(2m,\{1,\dotsc,m-1\})$. 
\item The \emph{Hamiltonian cycle complements} $K_n-C_n \cong C(n,\{2,\dotsc,\lfloor n/2\rfloor\})$.
\end{itemize}
For the second of these classes, we deviate from usual convention (which would refer to them as ``$O_m$'') to emphasize the number of vertices rather than the ``dimension'' of the octahedron. 

We are also interested in another family of graphs that are not circulant graphs. Let $B_{2n+1}$ denote the graph with vertex set $0, 1, 2, \dotsc, 2n, x_0, x_1$, where all the numbered vertices are adjacent to one another, $x_0$ is adjacent to all the even vertices, and $x_1$ is adjacent to all the odd vertices. We call these graphs \emph{balanced split-complete graphs}. 

When the graph is nearly complete, one might expect that its genus equals its Euler lower bound, and the Map Color Theorem of Ringel, Youngs, \emph{et al.}\ shows that this is true for all complete graphs except the nonorientable genus of $K_7$. Our purpose is to provide new constructions of minimum genus embeddings for some of the graphs belonging to the aforementioned families. 

\subsection{Current graphs}

A \emph{current graph} $(\phi, \alpha)$ consists of a graph embedding $\phi\colon G \to S$ and a labeling $\alpha\colon E(G)^+ \to \Gamma$ of the graph's arcs with elements from a group $\Gamma$. We call $\Gamma$ the \emph{current group} and the arc labels \emph{currents}. If $\lambda\colon E(G) \to \{-1,+1\}$ is the edge signature, the edge labeling $\alpha$ needs to satisfy $\alpha(e^+) = -\lambda(e)\alpha(e^-)$ for every edge.

In all of our current graphs, we take $\Gamma$ to be a cyclic group $\mathbb{Z}_{2m}$ of even order, so it is meaningful to refer to the even and odd elements of this group. We draw our current graphs in the plane, possibly with edge crossings. If the vertex is solid (resp. hollow), then the rotation at that vertex is the clockwise (resp. counterclockwise) ordering of the arcs incident with that vertex. Because of the above requirement for the arc-labeling on twisted edges, they are drawn with two arrows pointing in opposite directions. For current graphs with twisted edges, when a corner of a face is visited while the local orientation of the corresponding face-boundary walk is reversed, we label it with a dot. We call such a corner \emph{reversed}.

The \emph{index} of a current graph is the number of faces in its embedding. We restrict ourselves to index 1 and 2 current graphs. The face boundaries are called \emph{circuits}---for index 1 current graphs, the circuit is labeled $[0]$, and for index 2 current graphs, they are labeled $[0]$ and $[1]$. Given a circuit, its \emph{log} replaces each arc $e^\pm$ along the face-boundary walk of the circuit with the corresponding current $\alpha(e^\pm)$ if the walk is in its original orientation and $-\alpha(e^\pm)$ when the orientation is reversed.

The \emph{excess} of a vertex is the sum of the currents on the arcs leaving the vertex. We say that a vertex satisfies \emph{Kirchhoff's current law (KCL)} if its excess is 0 in the current group. Our goal is to find triangular or near-triangular embeddings of circulant graphs $C(n, S)$, so we restrict ourselves to current graphs that satisfy the following standard properties:

\begin{enumerate}
\item[(C1)] Each vertex is of degree 1, 2, or 3.
\item[(C2)] The log of each circuit contains each element of $\{\pm s\mid s \in S\}$ exactly once.
\item[(C3)] KCL holds at every vertex of degree 3. 
\item[(C5)] If circuit $[a]$ traverses arc $e^+$ and circuit $[b]$ traverses arc $e^-$, then $b-a \equiv \alpha(e^+)\pmod{k}$, where $k$ is the index of the current graph. 
\end{enumerate}

All vertices that have degree less than 3 must fall under one of the following categories:
\begin{enumerate}
\item[(T1)] A vertex of degree 1 whose excess is of order 2 in $\mathbb{Z}_{2m}$,
\item[(T2)] A vertex of degree 1 whose excess is of order 3 in $\mathbb{Z}_{2m}$,
\item[(T3)] A vertex of degree 1 whose excess generates the subgroup of even elements, or
\item[(T4)] A vertex of degree 2 whose incident arcs have odd currents, and whose excess generates the subgroup of even elements.
\end{enumerate}

Since such vertices violate KCL, we call them \emph{vortices}. Vortices of type (T1) and (T2) generate faces of length 2 and 3, respectively. The length 2 ``lunes'' can be suppressed by deleting one of the two edges, so a vortex of type (T1) does not count as a violation of property (C2). In fact, when the index of the current graph is 1, the order 2 current must be incident with a vortex of type (T1). We follow the traditional convention of having the current appear once in the log, and drawing the edge without a direction and without its degree 1 endpoint.  

Vortices of the other two types (T3) and (T4) generate a small number of long faces. In our current graphs, we label them with as many letters as the number of faces they generate. Vortices of type (T3) generate $m$-sided faces, and vortices of type (T4) generate $2m$-sided faces. If the index is 1, then two such faces are generated per vortex, and if the index is 2, then only one such face is generated. The remaining vertices are of degree 3 and satisfy KCL. Hence, they generate triangular faces. 

We follow the convention where vortex labels are added to the log as they are encountered, which serves as a reminder as to where the nontriangular faces will be. Current graphs generate a \emph{derived embedding} where the vertices are the elements of the current group, and when the above properties are satisfied, the derived embedding is of the circulant graph $C(n, S)$ with triangular faces except at faces generated by vortices of type (T3) and (T4). 

The rotation system of the derived embedding is specified in the following way: to generate the rotation at vertex $i \in \mathbb{Z}_{2m}$, temporarily ignore any vortex labels and add $i$ to each numbered element of the log of $[i \bmod k]$, where $k$ is the index. The signature of an edge is $+1$ if its corresponding edge in the current graph is bidirectional, and $-1$, otherwise. Then, subdivide each nontriangular face corresponding to a vortex of type (T3) or (T4) with a vertex with the same label. When the above properties are satisfied, the derived embedding, with its subdivision vertices, is triangular. 

The index 2 current graph in Figure~\ref{fig-k13} satisfies the above properties and has two vortices of type (T3). Its logs are:

$$\begin{array}{rrrrrrrrrrrrrrrrr}
\lbrack0\rbrack. & 10 & x_0 & 2 & 9 & 3 & 1 & 5 & 6 & 11 & 4 & 8 & 7 \\
\lbrack1\rbrack. & 2 & x_1 & 10 & 9 & 6 & 3 & 5 & 1 & 7 & 8 & 4 & 11 \\
\end{array}$$

\begin{figure}[!ht]
\centering
\includegraphics[scale=0.85]{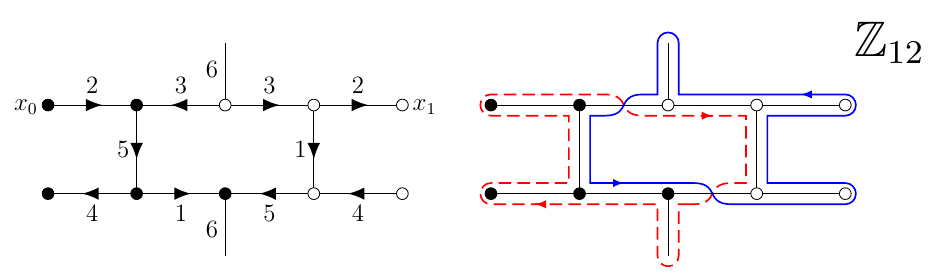}
\caption{A current graph and its face boundaries.}
\label{fig-k13}
\end{figure}

Following the procedure above, the final rotation system is:

$${\small\begin{array}{rrrrrrrrrrrrrrrrr}
0. & 10 & x_0 & 2 & 9 & 3 & 1 & 5 & 6 & 11 & 4 & 8 & 7 \\
1. & 3 & x_1 & 11 & 10 & 7 & 4 & 6 & 2 & 8 & 9 & 5 & 0 \\
2. & 0 & x_0 & 4 & 11 & 5 & 3 & 7 & 8 & 1 & 6 & 10 & 9 \\
3. & 5 & x_1 & 1 & 0 & 9 & 6 & 8 & 4 & 10 & 11 & 7 & 2 \\
4. & 2 & x_0 & 6 & 1 & 7 & 5 & 9 & 10 & 3 & 8 & 0 & 11 \\
5. & 7 & x_1 & 3 & 2 & 11 & 8 & 10 & 6 & 0 & 1 & 9 & 4 \\
6. & 4 & x_0 & 8 & 3 & 9 & 7 & 11 & 0 & 5 & 10 & 2 & 1 \\
7. & 9 & x_1 & 5 & 4 & 1 & 10 & 0 & 8 & 2 & 3 & 11 & 6 \\
8. & 6 & x_0 & 10 & 5 & 11 & 9 & 1 & 2 & 7 & 0 & 4 & 3 \\
9. & 11 & x_1 & 7 & 6 & 3 & 0 & 2 & 10 & 4 & 5 & 1 & 8 \\
10. & 8 & x_0 & 0 & 7 & 1 & 11 & 3 & 4 & 9 & 2 & 6 & 5 \\
11. & 1 & x_1 & 9 & 8 & 5 & 2 & 4 & 0 & 6 & 7 & 3 & 10 \\
x_0. & 0 & 10 & 8 & 6 & 4 & 2 \\
x_1. & 1 & 3 & 5 & 7 & 9 & 11 \\
\end{array}}$$

This rotation system is a triangular embedding of the balanced split-complete graph $B_{13}$. Merging the vertices $x_0$ and $x_1$ with a handle results in a genus embedding of $K_{13}$. 

When the current graph is embedded in an orientable surface, the derived embedding will also be orientable. In one example (Section~\ref{sec-case8non}), we describe nonorientable current graphs of index 2 whose derived embeddings are still nonorientable. However, it is possible for a nonorientable current graph to have an orientable derived embedding. We say that a nonorientable index 1 current graph is an \emph{orientable cascade} if it satisfies an additional property:
\begin{enumerate}
\item[(O*)] An edge is bidirectional if and only if its current is even.
\end{enumerate}
If (O*) is satisfied, then an edge is twisted if and only if it is incident with one even vertex and one odd vertex. Thus, the derived embedding is orientable, since flipping all of the odd vertices results in a pure rotation system. 

As an example, consider the orientable cascade in Figure~\ref{fig-o28}. Its log is:
$$\begin{array}{rrrrrrrrrrrrrrrrrrrrrrrrrrr}
\lbrack0\rbrack. & 1 & 10 & 18 & 20 & 13 & 8 & 14 & 15 & 17 & 4 & 22 & 7 & 11 & \dots \\ & 19 & y & 21 & z & 23 & 9 & 2 & 6 & 16 & 5 & w & 3 & x \\
\end{array}$$
The derived embedding is of the graph $O_{24}+\overline{K_4}$, where $G+H$ denotes the graph join of $G$ and $H$. Since the current graph satisfies (O*), the derived embedding is orientable. Later on, we show how it can be converted into a genus embedding of the octahedral graph $O_{28}$.

\begin{figure}[!ht]
\centering
\includegraphics[scale=0.85]{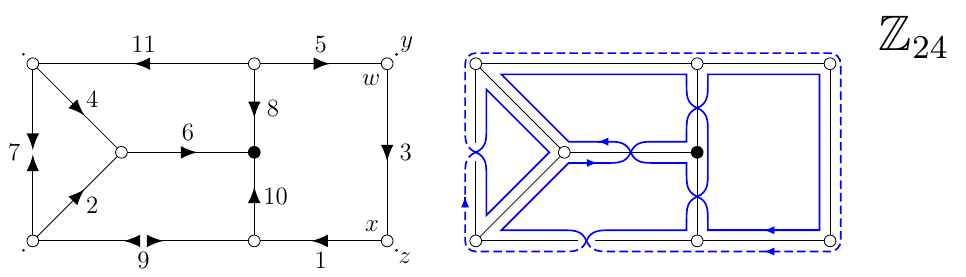}
\caption{An orientable cascade with two vortices of type (T4).}
\label{fig-o28}
\end{figure}

\section{Jungerman ladders}\label{sec-jungerman}

Finding infinite families of current graphs is difficult, so typically one restricts the ``shape'' of the underlying graphs. A standard trick is to include a \emph{ladder} of varying length, such as the ones in Figure~\ref{fig-exsnake}, where there are four partial circuits that traverse the length of the ladder. The vertical arcs are referred to as \emph{rungs} and the horizontal arcs are divided into \emph{top} and \emph{bottom horizontals}. The currents assigned to these ladders are often derived from some kind of labeling problem, e.g., graceful labelings of paths~\cite{Goddyn-Exponential}, where the rungs correspond to differences between consecutive labels. 

\begin{figure}[!ht]
\centering
    \begin{subfigure}[b]{0.98\textwidth}
    \centering
        \includegraphics[scale=0.85]{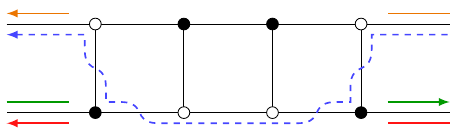}
        \caption{}
        \label{subfig-partial}
    \end{subfigure}
    \begin{subfigure}[b]{0.98\textwidth}
    \centering
        \includegraphics[scale=0.85]{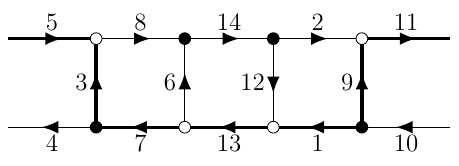}
        \caption{}
        \label{subfig-snake}
    \end{subfigure}
\caption{An example of a small Jungerman ladder highlighting the path of odd currents.}
\label{fig-exsnake}
\end{figure}

Property (O*) for orientable cascades can be accounted for in a ladder by making exactly one of the two leftmost horizontals odd. If KCL is satisfied on every vertex of the ladder, regardless of the labels on the rungs, there will be a path of odd currents that run through the entire length of the ladder. Suppose that exactly one of the partial circuits is in ``reversed'' behavior. There is a unique assignment of the rotations such that the reversed partial circuit traverses exactly that path of odd currents. Thus, property (O*) is satisfied on each of the ladder's edges. We call such a construction a \emph{Jungerman ladder}, an example of which is given in Figure~\ref{fig-exsnake}(b).

In all of our constructions, we use the shorthand in Figure~\ref{fig-exladder}. The orientations on the rungs alternate and the labels on the rungs form an arithmetic sequence from left to right with step size equal to the number in the box. The horizontals are filled in by enforcing KCL at each vertex. We always use an odd number as the step size, so the rotations on the vertices will be one of two possible ``checkerboard'' patterns. The smallest integer $s$ for which such a ladder is defined, as seen in the bottom-left example in Figure~\ref{fig-exladder}, is the one which causes the left- and right-hand sides to coincide, creating a ladder with 0 rungs. 

\begin{figure}[!ht]
\centering
\includegraphics[scale=0.85]{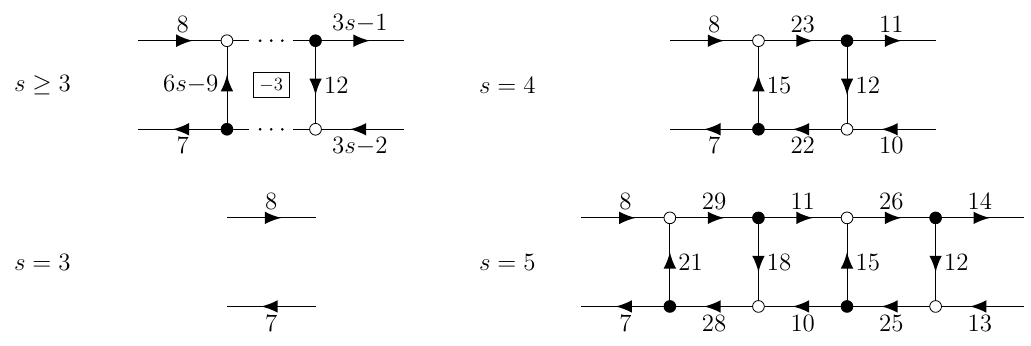}
\caption{A typical example of a family of Jungerman ladders.}
\label{fig-exladder}
\end{figure}

An example of an infinite family of orientable cascades using a Jungerman ladder is shown in Figure~\ref{fig-case4cascade}. These current graphs generate orientable triangular embeddings of $K_{12s+4}$ for even values of $s$, even when $s = 0$. For odd values of $s$, the resulting current graph has the wrong index, and we later show that this cannot be circumvented. 

\begin{figure}[!ht]
\centering
\includegraphics[scale=0.85]{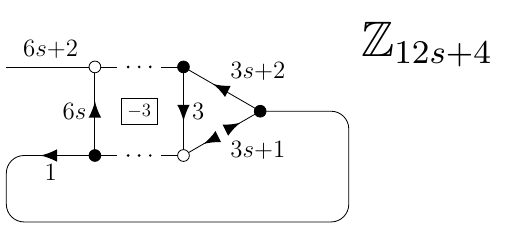}
\caption{A simple family of orientable cascades.}
\label{fig-case4cascade}
\end{figure}

\section{The good news: octahedral graphs}

Jungerman and Ringel~\cite{JungermanRingel-Octa} constructed triangular embeddings of $O_{2m}$ when $m \equiv 0,1 \pmod{3}$ using orientable cascades. For the remaining octahedral graphs, they conjectured that the genus matches the Euler lower bound, as well. We resolve the missing cases, also using orientable cascades:

\begin{theorem}
$$\gamma(O_{2m}) = \left\lceil \frac{(m-1)(m-3)}{3} \right\rceil$$
for $m \equiv 2 \pmod{3}$.
\end{theorem}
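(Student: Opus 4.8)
The lower bound $\gamma(O_{2n}) \geq \lceil (n-1)(n-3)/3 \rceil$ is immediate from the Euler lower bound, so the entire content of the statement is the construction of an embedding attaining it. Writing $n = 3t+2$, the target genus is $\lceil (9t^2-1)/3 \rceil = 3t^2$. Since $(n-1)(n-3) \equiv 2 \pmod 3$ in this case, $O_{2n}$ admits no triangular embedding, so I cannot hope to realize it directly from a current graph satisfying (O2). Instead, following the strategy foreshadowed by Figure~\ref{fig-o28}, the plan is to first build a triangular embedding of the auxiliary graph $O_{2(n-2)} + \overline{K_4} = O_{6t} + \overline{K_4}$ and then repair it into $O_{2n}$ by adding a single handle.

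First I would set up the current graphs. The octahedral part $O_{6t} = C(6t, \{1,\dots,n-3\})$ forces the current group $\mathbb{Z}_{2n-4} = \mathbb{Z}_{6t}$, whose even subgroup has order $3t = n-2$, exactly the order-$2$ element $n-2$ excluded from the generating set. I would produce two parameterized families built around a Jungerman ladder whose length grows linearly in $t$, together with two type-(V1) vortices whose excesses generate the even subgroup; the four Hamiltonian faces these vortices create become the four vertices of $\overline{K_4}$ after subdivision, each adjacent to all $6t$ octahedral vertices. Because the behaviors of the partial circuits through a checkerboard-rotated ladder repeat only every four rungs, the two families split by the parity of $t$, i.e. $n \equiv 2$ versus $n \equiv 5 \pmod 6$. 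For the parity admitting an orientable cascade I would present an index $1$ cascade satisfying (O1)--(O5) and (N6), and for the other an index $2$ current graph satisfying (O1)--(O6); the small values of $t$, together with the base case $n=2$ (where $O_4$ is a planar $4$-cycle), would be checked to fall into these families or dispatched by hand.

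The verification is the combinatorial heart of the argument. I must trace the single circuit (or the pair of circuits) through the ladder and check that its log lists each element of $\mathbb{Z}_{6t} \setminus \{0, 3t\}$ exactly once (O2), that KCL holds at every degree-$3$ vertex and throughout the ladder (O3), and that the odd/even dichotomy governing one-way arcs (N6) or double traversals (O6) holds uniformly in $t$. For the cascade I would additionally confirm that the derived embedding is orientable by reversing the local orientation at every odd-numbered vertex, exactly as in the worked example. Granting these principles, the face-tracing correspondence (Theorem 4.4.1 of Gross and Tucker~\cite{GrossTucker}) yields a triangular embedding of $O_{6t} + \overline{K_4}$, and an Euler-characteristic count gives its genus as $3t^2 - 1$.

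Finally I would perform the repair. In $O_{2n}$ any two antipodal (matched) pairs of vertices induce a $4$-cycle, which is precisely $O_4$, so upgrading $O_{6t} + \overline{K_4}$ to $O_{2n}$ amounts to adding exactly these four edges on the four subdivision vertices; indeed a direct count gives $|E(O_{2n})| - |E(O_{6t} + \overline{K_4})| = 4$, and one extra handle raises the genus from $3t^2 - 1$ to precisely $3t^2$ while producing only a constant number of larger faces, consistent with the fact that $O_{2n}$ has no triangular embedding. I would exhibit a local configuration near the four subdivision vertices in which a single handle accommodates all four new edges. I expect the main obstacle to be exactly the uniform verification of (O2) together with (N6)/(O6) across the whole ladder family --- ensuring that the arithmetic progression on the rungs makes the circuit sweep out every required current once and with the correct behavior for every admissible $t$ --- and secondarily the bookkeeping of the handle addition so that it realizes the four $O_4$ edges while keeping the embedding cellular.
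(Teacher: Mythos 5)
Your overall strategy matches the paper's: lower bound from the Euler formula, a triangular embedding of $O_{2n-4}+\overline{K_4}$ built from orientable cascades with two type (V1) vortices and Jungerman ladders split by parity, and then one handle to convert it into a genus embedding of $O_{2n}$; the genus arithmetic ($3t^2-1$ plus one handle gives $3t^2$) is also correct. But your repair step has a genuine gap. You propose to ``exhibit a local configuration near the four subdivision vertices in which a single handle accommodates all four new edges.'' This cannot work as stated: in a triangular embedding of $O_{2n-4}+\overline{K_4}$ the subdivision vertices $w,x,y,z$ are pairwise non-adjacent, so no face contains two of them --- every face is a triangle containing at most one of the four. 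A single handle joins two faces, which therefore meet at most two of the four vertices, and every edge you can subsequently draw (through the tube, or as a chord of the merged face) has both endpoints among the vertices of those two faces. Hence one handle can supply new edges at no more than two of $w,x,y,z$, while the required $4$-cycle $w$--$x$--$z$--$y$--$w$ touches all four. The paper circumvents precisely this obstruction with a preliminary genus-neutral exchange: delete the octahedral edges $(0,3)$ and $(0,-3)$, insert $(w,x)$ and $(y,z)$ as diagonals of the two resulting quadrilateral faces, and perform a sequence of edge flips (relying on the specific form of the rotations at $-6$ and $x$, which is why the cascades are engineered to produce it) to win back $(0,3)$. Only after these modifications do there exist faces containing two subdivision vertices each, so that a single handle can simultaneously add $(x,z)$ and $(w,y)$ and reinstate the temporarily deleted edges $(0,-3)$ and $(x,-3)$. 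Without some maneuver of this kind, your construction ends one handle short of $O_{2n}$.

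A second, smaller issue concerns the small cases, which you defer to being ``checked to fall into these families or dispatched by hand.'' For $2n=10$ the required triangular embedding of $O_6+\overline{K_4}$ provably does not exist (the paper cites Ellingham and Schroeder for this), so the entire scheme fails there, not merely the explicit cascade families; the paper instead obtains a genus embedding of $O_{10}$ by deleting two edges from a triangular embedding of $K_{10}-3K_2$. Similarly, $2n=16$ and $2n=22$ lie below the range of the cascade families and are handled by ad hoc embeddings found by exhaustive search. These cases must be recognized as needing genuinely different constructions rather than routine verification. Your suggestion that one parity of $t$ might require an index~2 current graph rather than an orientable cascade is a harmless deviation --- the paper in fact finds cascades for both parities --- but it is not where the difficulty lies.
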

\begin{proof}
For sufficiently large $m$, we start with a triangular embedding of the graph join $O_{2m-4}+\overline{K_4}$ and supply the missing edges using one handle. The vertices of $O_{2m-4}$ are numbered $0, 1, \dotsc, 2m-5$ and the vertices of $\overline{K_4}$ are $w, x, y, z$. As the naming scheme suggests, we will use orientable cascades with current group $\mathbb{Z}_{2m-4}$ whenever possible. 

This join is disconnected when $2m = 4$, and Ellingham and Schroeder~\cite{EllinghamSchroeder-Ham2} showed that when $2m = 10$, no orientable triangular embedding exists. We find genus embeddings in other ways: $O_4$ is just the planar 4-cycle $C_4$, and a genus embedding of $O_{10} \cong K_{10}-5K_2$ can be found by deleting two additional independent edges from the triangular embedding of $K_{10}-3K_2$ in Appendix~\ref{app-sporadic}. The construction of the latter embedding is due to Jonathan L.\ Gross (personal communication).

Returning to the general case, we use the two \emph{ad hoc} rotation systems for $2m = 16, 22$ listed in Appendix \ref{app-sporadic} and the orientable cascades in Figures \ref{fig-o28}, \ref{fig-smallcascades}--\ref{fig-case4-cascades-odd}. Transforming these triangular embeddings of $O_{2m-4}+\overline{K_4}$ into genus embeddings of $O_{2m}$ will proceed in a relatively unified fashion. The rotation at vertex $0$ is always of the form:
$${\begin{array}{rrrrrrrrrrrrrrrrrr}
0. & \dots & w & 3 & x & \dots & y & 2m{-}7 & z & \dots  \\
\end{array}}$$
We first apply a few modifications to the derived embeddings using \emph{edge flips}, where an edge is deleted and the other diagonal of the resulting quadrilateral face is added in its place. Delete the edges $(0,3)$ and $(0,2m{-}7)$, and add $(w,x)$ and $(y,z)$ in the newly created quadrilaterals, as shown in Figure~\ref{fig-octadj}(a). For each embedding, there is a sequence of edge flips, shown in Figure \ref{fig-edgeflip}, that starts by deleting the edge $(2m{-}7, x)$ and ends with adding back the edge $(0,3)$.

\begin{figure}[!ht]
    \centering
    \begin{subfigure}[b]{0.49\textwidth}
    \centering
        \includegraphics[scale=0.7]{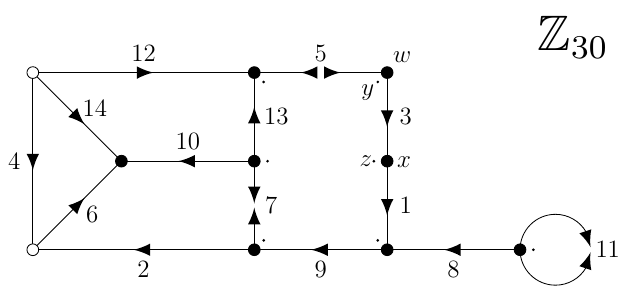}
        \caption{}
        \label{subfig-o34}
    \end{subfigure}
    \begin{subfigure}[b]{0.49\textwidth}
    \centering
        \includegraphics[scale=0.7]{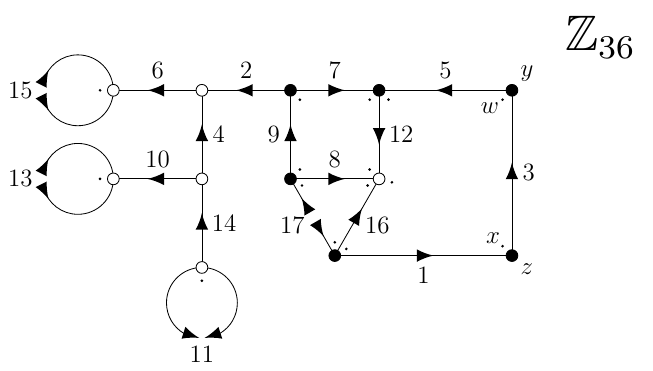}
        \caption{}
        \label{subfig-o40}
    \end{subfigure}
\caption{Orientable cascades for $O_{30}+\overline{K_4}$ (a) and $O_{36}+\overline{K_4}$ (b).}
\label{fig-smallcascades}
\end{figure}

\begin{figure}[!ht]
\centering
\includegraphics[scale=0.8]{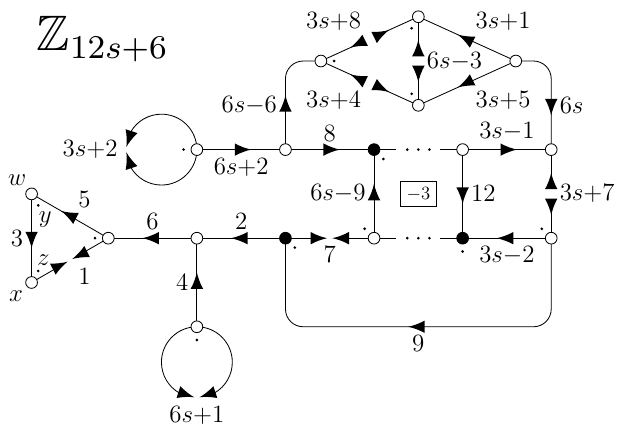}
\caption{$O_{12s+6}+\overline{K_4}$, odd $s \geq 3$.}
\label{fig-case10-cascades-odd}
\end{figure} 

\begin{figure}[!ht]
\centering
\includegraphics[scale=0.8]{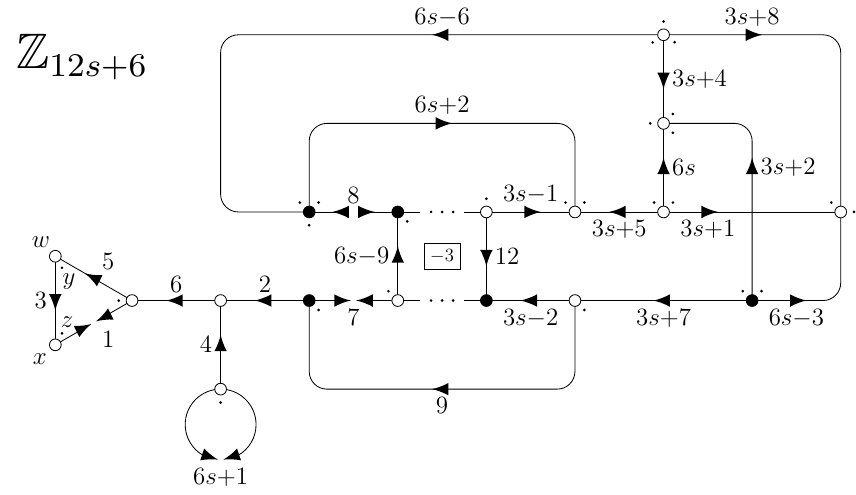}
\caption{$O_{12s+6}+\overline{K_4}$, even $s \geq 4$.}
\label{fig-case10-cascades-even}
\end{figure} 

\begin{figure}[!ht]
\centering
\includegraphics[scale=0.8]{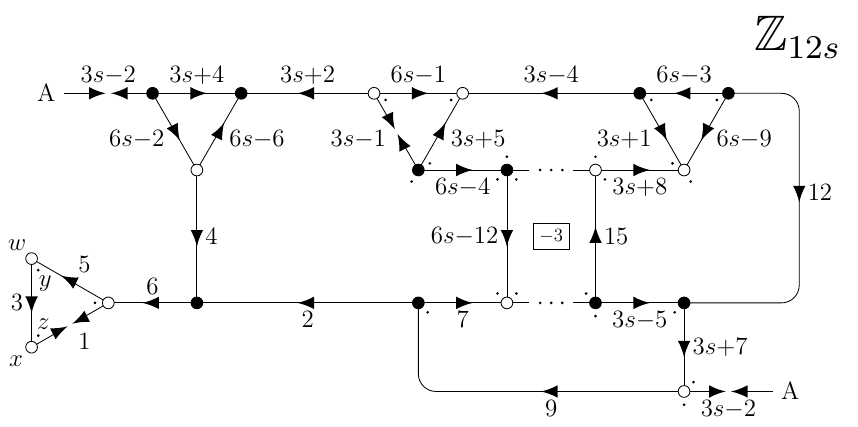}
\caption{$O_{12s}+\overline{K_4}$, even $s \geq 4$.}
\label{fig-case4-cascades-even}
\end{figure} 

\begin{figure}[!ht]
\centering
\includegraphics[scale=0.8]{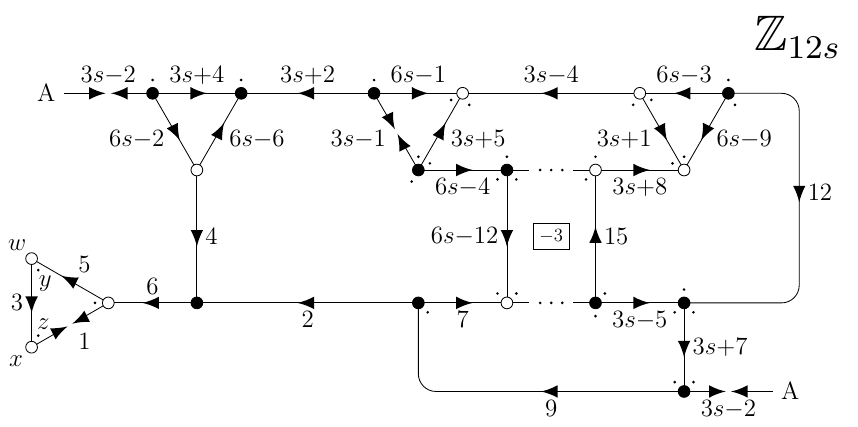}
\caption{$O_{12s}+\overline{K_4}$, odd $s \geq 5$.}
\label{fig-case4-cascades-odd}
\end{figure} 

\begin{figure}[!ht]
    \centering
    \begin{subfigure}[b]{0.65\textwidth}
    \centering
        \includegraphics[scale=0.8]{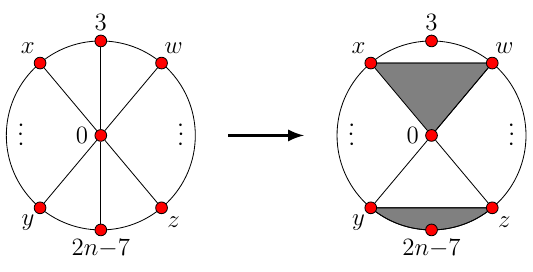}
        \caption{}
        \label{subfig-octadd1}
    \end{subfigure}
    \begin{subfigure}[b]{0.34\textwidth}
    \centering
        \includegraphics[scale=0.8]{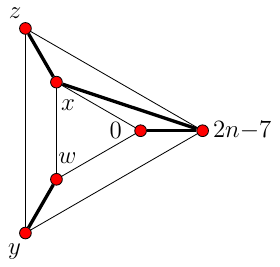}
        \caption{}
        \label{subfig-octadd2}
    \end{subfigure}
\caption{Steps to adding four edges with one handle.}
\label{fig-octadj}
\end{figure}

\begin{figure}[!ht]
\centering
\includegraphics[scale=0.8]{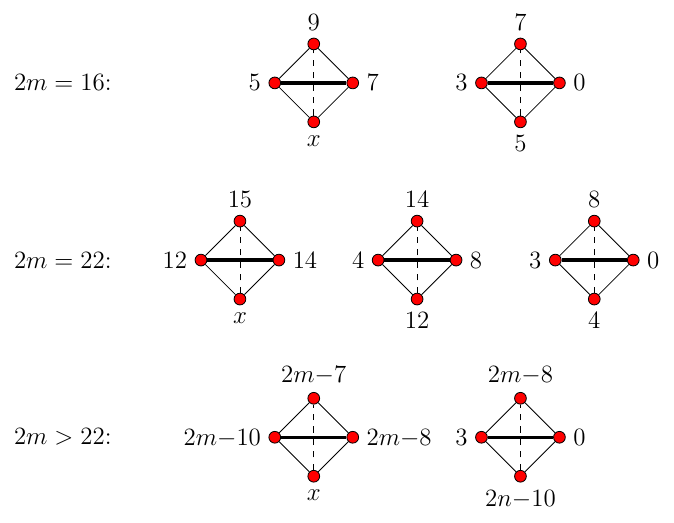}
\caption{Sequences of edge flips that regain the edge $(0,3)$.}
\label{fig-edgeflip}
\end{figure}

Finally, Figure~\ref{fig-octadj}(b) shows how one can insert a handle between the shaded faces in Figure~\ref{fig-octadj}(a) to add the new edges $(x,z)$ and $(w,y)$, and reinsert the deleted edges $(0, 2m{-}7)$ and $(x, 2m{-}7)$. The final embedding is a minimum genus embedding of the octahedral graph $O_{2m}$, which can be checked by noting that there are two quadrilateral faces formed by the handle, and all the other faces are triangular.

\end{proof}

\section{The bad news: nonexistence of orientable cascades}
\label{sec-nonexist}

Jungerman \cite{Jungerman-OrientableCascades} showed that orientable cascades can only exist when a certain parity condition is satisfied. The original proof is somewhat complicated and relies on the fact that the derived embedding is of a complete graph. We prove a generalization to other circulant graphs using a simpler argument that interprets orientable cascades as one-face embeddings in a nonorientable surface, as opposed to purely combinatorial objects. In this section, we assume that we are working with an orientable cascade whose derived embedding is of a circulant graph $C(n, S)$, where the order 2 element $n/2$ is in the generating set $S$. The number of vertices, edges, and twisted edges are denoted by $|V|$, $|E|$, and $\varepsilon$, respectively. We also write $t_i$ for the number of vortices of type (T$i$), for $i = 1, 2, 3, 4$. Since the order 2 element is present, $t_1 = 1$. 

Our proof of the parity condition proceeds in three steps. First, we show that the order of the current group must be a multiple of 4, allowing us to perform calculations modulo 4. Next, we prove two separate parity conditions on the number of twisted edges. Finally, we show that these two parity conditions cannot always be satisfied at the same time. 

\begin{proposition}
If there exists an orientable cascade that generates a derived embedding of $C(n, S)$, then $n$ is a multiple of $4$. 
\label{prop-mult4}
\end{proposition}
\begin{proof}
The edge incident with the vortex of type (T1) must be bidirectional, and hence its current, $n/2$, is even. 
\end{proof}

Bernardi and Chapuy \cite{BernardiChapuy} introduced the idea of a \emph{canonical} rotation system for embeddings of graphs where every vertex has degree 1 or 3: for each vertex, choose the orientation (via a vertex flip, if necessary) that results in fewer reversed corners at that vertex. They showed that, in the canonical rotation system, every twisted edge is unidirectional. We adapt this result for our situation where some vertices may have degree 2. We allow either orientation for each vertex of degree 2, leading to potentially multiple rotation systems that can all be called canonical. 

\begin{lemma}
In a canonical rotation system of an orientable cascade, all twisted edges are unidirectional, or equivalently, have odd current.
\label{lem-canonical}
\end{lemma}
\begin{proof}
If the orientable cascade has any vortices of type (T4), temporarily smooth away those degree 2 vertices. Lemma 2 of Bernardi and Chapuy \cite{BernardiChapuy} shows that in the canonical rotation system, every twisted edge is unidirectional. Now, we re-introduce those degree 2 vertices via subdivision, giving them an arbitrary orientation. Some of the subdivision edges may be twisted, but since they are incident with a vortex of type (T4), they must have odd current, and hence be unidirectional.
\end{proof}

The parity condition arises specifically from the number of twisted edges in a canonical rotation system. This number is constrained simultaneously by both the Euler polyhedral formula and the vortices. 

\begin{lemma}
In a canonical rotation system of an orientable cascade, $|V|+|E|$ and the number of twisted edges $\varepsilon$ have opposite parity. 
\label{lem-opp}
\end{lemma}
\begin{proof}
By Lemma \ref{lem-canonical}, the twisted edges are all unidirectional. Switching the signature of any such edge either merges two faces together, or splits a face into two faces. In other words, the number of faces changes parity for each switch, so if we switch each of the $\varepsilon$ twisted edges, the number of faces $|F|$, which started out at 1, is of the opposite parity from $\varepsilon$ in the final embedding. The final embedding is in an orientable surface, which has even Euler characteristic. Thus, by the Euler polyhedral formula, $|F|$ has the same parity as $|V|+|E|$.
\end{proof}

The relationship between vortices and twisted edges is a consequence of a ``global'' KCL:

\begin{proposition}[e.g., Ringel\ {\cite[p.31]{Ringel-MapColor}}]
Let $G = (V, E)$ be a graph and $\alpha\colon E(G)^+ \to \Gamma$ be an arc-labeling satisfying $\alpha(e^+) = -\alpha(e^-)$ for every edge $e \in E$. Then the sum of the excesses of all the vertices is 0.
\label{prop-gkcl}
\end{proposition}
\begin{proof}
Each edge $e$ contributes $\alpha(e^+)$ to the excess of one of its endpoints, and $-\alpha(e^+)$ to that of the other.
\end{proof}

\begin{lemma}
In a canonical rotation system of an orientable cascade with current group $\mathbb{Z}_{4k}$, $\varepsilon +t_3+t_4 \equiv k \pmod{2}$.
\label{lem-vortexsum}
\end{lemma}
\begin{proof}
Because of twisted edges, the arc-labeling of an orientable cascade does not initially satisfy the conditions needed for Proposition \ref{prop-gkcl}. Let $(u,v)$ be a twisted edge where both of its arcs have an odd current of $\gamma$. Following Figure \ref{fig-subdivide}, we subdivide the edge by adding a new vertex $w$ and give the label $\gamma$ to the arcs $u{\to}w$ and $v{\to}w$, and $-\gamma$ to $w{ \to}u$ and $w{\to}v$. Repeating this process for every twisted edge results in an arc-labeled graph satisfying Proposition \ref{prop-gkcl}, the global KCL, at the cost of new vertices with nonzero excess.

Since the order of the current group is a multiple of 4, the global KCL is also satisfied modulo 4. The excess of every subdivision vertex is $2 \pmod{4}$ because it is twice some odd number. The excess of every vortex of type (T3) and (T4) is $2 \pmod{4}$ since they have to generate the subgroup of even elements. A vortex of type (T2) must have excess $0 \pmod{4}$, because if $3$ divides $4k$, it must divide $k$. The excess of a vortex of type (T1) depends on the current group itself, so at this point, we leave it as $2k$. Since all other vertices satisfy KCL, Proposition \ref{prop-gkcl} tells us that the excesses of the vortices and subdivision vertices sum to 0. Thus,
$$2\varepsilon + 2t_3 + 2t_4 + 2k \equiv 0 \pmod{4},$$
or equivalently,
$$\varepsilon + t_3 + t_4 \equiv k \pmod{2}.$$
\end{proof}

A similar parity condition to Lemma \ref{lem-vortexsum} specifically for complete graphs was proven by Jungerman \cite[Thm.\ 4]{Jungerman-OrientableCascades}.

\begin{figure}[!ht]
\centering
\includegraphics[scale=0.8]{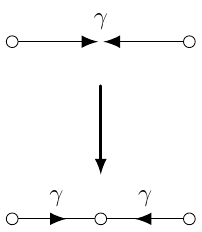}
\caption{Removing twisted edges from an orientable cascade.}
\label{fig-subdivide}
\end{figure}

With these technical ingredients in place, we can finally prove our strengthening of Jungerman's impossibility result. Jungerman's original exposition stops short of giving a general statement, instead applying Lemma \ref{lem-vortexsum} directly to each of the families of graphs he was considering. Further generalizations of the forthcoming result can be made for other circulant graphs that are not near-complete, such as the symmetric complete quadripartite graphs considered in Jungerman \cite{Jungerman-Quadripartite}. 

\begin{theorem}
Let $G_s = C(12s+4j, S_s)$ be a family of circulant graphs, for some fixed $j$ and $s = 1, 2, \dotsc$, where the order 2 element $6s+2j$ is in each generating set $S_s$, and that $|\{1, \dotsc, 6s+2j\} \setminus S_s|$ is constant for all $s$ (i.e., the ``same'' elements are missing from the generating set). Fix the number $t_i$ of each type of vortex. For at least one parity of $s$, there do not exist orientable cascades with $t_1$, $t_2$, $t_3$, and $t_4$ vortices of type (T1), (T2), (T3), and (T4), respectively, whose derived embeddings are $G_s$.
\label{thm-nonexistence}
\end{theorem}
\begin{proof}
It suffices to show that, for all $s'$, such orientable cascades cannot exist simultaneously for both $s'$ and $s'+1$. Suppose both exist. Since $G_{s'+1}$ has 12 more vertices than $G_{s'}$, its generating set also has 12 more elements. Thus, the orientable cascade for $G_{s'+1}$ has 6 more edges than that of $G_{s'}$. Fixing the number of each type of vortex forces there to be 4 more vertices of degree 3. By Lemma \ref{lem-opp}, the parity of the number of twisted edges in their canonical rotation systems is the same, so the left-hand side of Lemma \ref{lem-vortexsum} is the same for both $s'$ and $s'+1$, as well. However, the parity of the right-hand side, $k = 3s+j$, is different between $s'$ and $s'+1$, which is a contradiction.   
\end{proof}

Gross and Tucker \cite[p.231]{GrossTucker} considered whether there exist orientable cascades that generate triangular orientable embeddings of $K_{12s+4}$, for all $s \geq 0$. Figure \ref{fig-case4cascade} solves the even $s$ case, but Theorem \ref{thm-nonexistence} shows that they do not exist for odd $s$. A similar parity constraint will be true for all of the forthcoming families of graphs. However, all is not lost, as we will still be able to solve the other parity by other means.

\section{Some good news again: families of index 2 current graphs}

Recall that the derived embedding of an orientable cascade is oriented by flipping all of the odd-numbered vertices. One can alternatively interpret this rotation system as having been generated from an \emph{orientable} index 2 current graph, i.e., every orientable cascade has an ``orientable double cover.''

While orientable cascades and their equivalent symmetric index 2 cousins may not always exist, one can at least salvage the Jungerman ladder as a useful construction technique: the double cover of a Jungerman ladder consists of two ladders with reversed arc directions and vertex rotations. As an example, the orientable cascades in Figure \ref{fig-case4cascade} can be interpreted as the index 2 current graphs in Figure \ref{fig-case4double}(a). The family in Figure \ref{fig-case4double}(b), which generate triangular embeddings of $K_{12s+4}$, for odd $s$, reuses much of the ladders, but the rest of the graph crucially introduces some asymmetry. The two families combined cover all values of $s$, solving a problem of Ringel \cite[p.149]{Ringel-MapColor}.

\begin{figure}[!ht]
\centering
    \begin{subfigure}[b]{0.4\textwidth}
    \centering
        \includegraphics[scale=0.8]{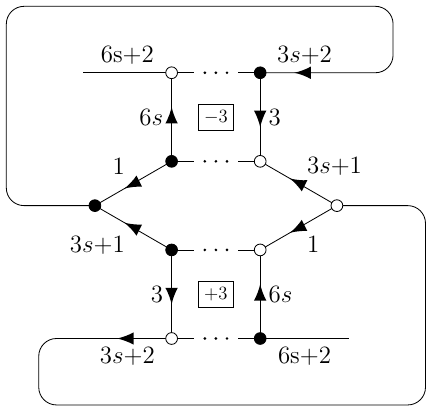}
        \caption{}
        \label{subfig-case4even}
    \end{subfigure}
    \begin{subfigure}[b]{0.59\textwidth}
    \centering
    \includegraphics[scale=0.8]{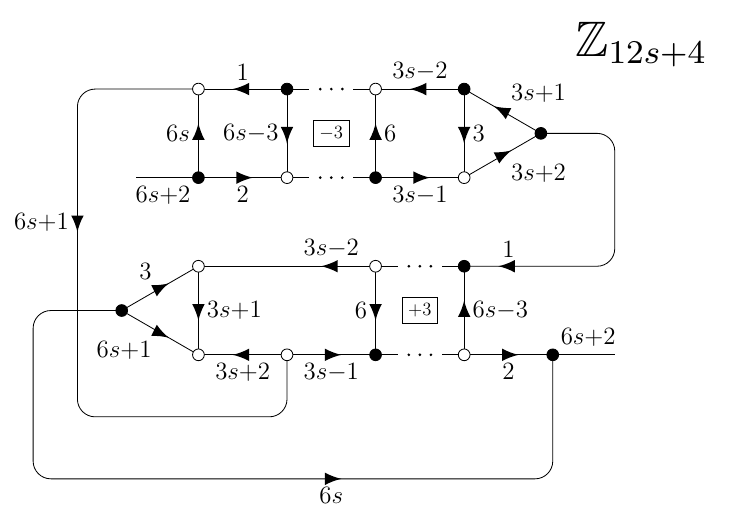}
        \caption{}
        \label{subfig-case4odd}
    \end{subfigure}
\caption{Index 2 current graphs for $K_{12s+4}$, for $s$ even (a) and $s$ odd (b).}
\label{fig-case4double}
\end{figure}

For each family of graphs considered in this section, we present two families of index 2 current graphs, though in some cases, the symmetry in one of the two families indicates that it is equivalent to a family of orientable cascades.

\subsection{Hamiltonian cycle complements}

An open problem of White~\cite{ArchdeaconWhite} asks for the genus of the complete graph minus a Hamiltonian cycle, which one might conjecture to be equal to its Euler lower bound
$$\gamma(K_n-C_n) \geq \left\lceil \frac{n^2-9n+12}{12} \right\rceil$$
except at a few small values of $n$. A triangular embedding is only possible when $n \equiv 0, 9 \pmod{12}$, and these embeddings were constructed for the $n \equiv 9\pmod{12}$ case by Gross and Alpert~\cite{GrossAlpert}. We find triangular embeddings of $K_{12s}-C_{12s}$, the other feasible residue, using the current graphs in Figures~\ref{fig-h12-even} and \ref{fig-h12-odd}. The currents $1$ and $-1$ are missing from both circuits, so in the derived graph, the absent edges form the missing Hamiltonian cycle. These current graphs are defined for $s \geq 2$, so the $s = 1$ case is treated in Appendix~\ref{app-sporadic}.

\begin{figure}[!ht]
\centering
\includegraphics[scale=0.8]{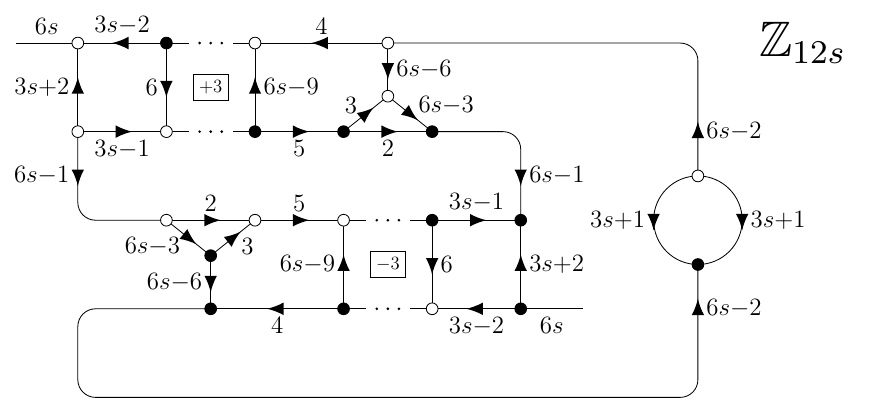}
\caption{$K_{12s}-C_{12s}$, $s$ even.}
\label{fig-h12-even}
\end{figure}

\begin{figure}[!ht]
\centering
\includegraphics[scale=0.8]{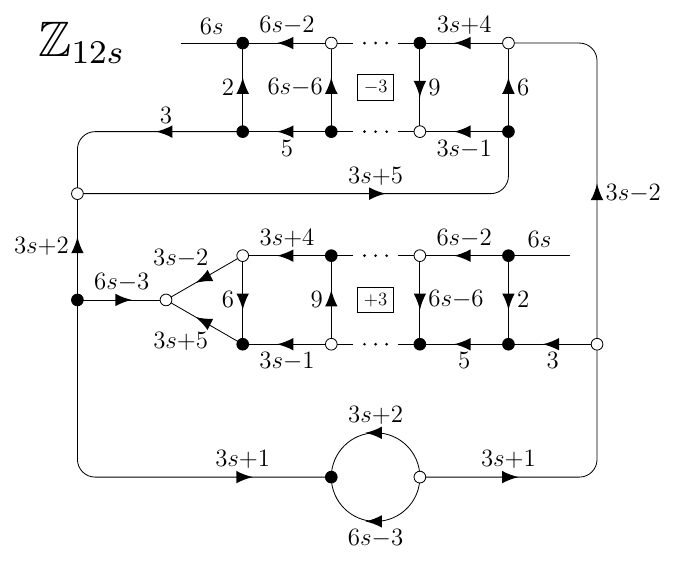}
\caption{$K_{12s}-C_{12s}$, $s$ odd, $s \geq 3$.}
\label{fig-h12-odd}
\end{figure}

\subsection{$K_{12s+8}-K_2$, non-orientable}\label{sec-case8non}

The original proof of the Map Color Theorem for nonorientable surfaces using current graphs~\cite[p.142]{Ringel-MapColor} tackled the $n \equiv 8 \pmod{12}$ case with a simple family of current graphs and an elegant surgery operation involving two crosscaps. An alternate route starts with a triangular embedding of $K_{12s+8}-K_2$ and adds the missing edge with a single crosscap. Ringel~\cite{Ringel-K8} proved that no such embedding exists for $s = 0$. 

Korzhik~\cite{Korzhik-Case8} gave a complex index 2 construction for $s \geq 1$, which we simplify using Jungerman ladders. Any triangular embedding of $K_{12s+8}-K_2$ is in a surface of odd Euler characteristic, so it must be nonorientable. Hence, any current graph construction must have twisted edges. Korzhik sidestepped this additional difficulty by using only one twisted edge, incident with the same vertex. Thus, the face boundaries can be oriented so that only the self-loop is unidirectional. Our constructions utilize this same trick, as seen in Figures~\ref{fig-c8-even} and \ref{fig-c8-odd}. We note that $3s+1$ (the current on the self-loop and the excess of the vortices in Figure \ref{fig-c8-odd}) is even when $s$ is odd.

While these triangular embeddings are not the easiest path to determining the nonorientable genus of $K_{12s+8}$, they are minimal triangulations (see Jungerman and Ringel~\cite{JungermanRingel-Minimal}) of the surfaces they embed in. 

\begin{figure}[!ht]
\centering
\includegraphics[scale=0.8]{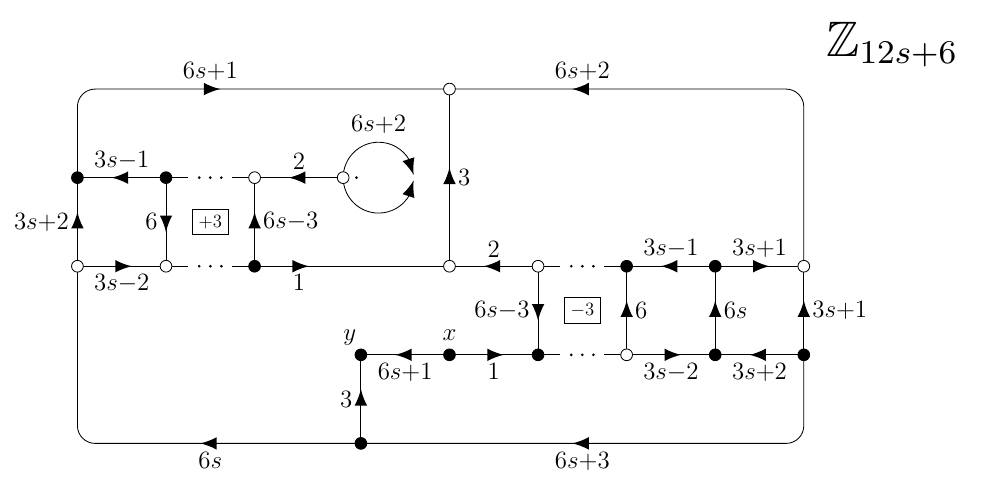}
\caption{$K_{12s+8}-K_2$, even $s \geq 2$.}
\label{fig-c8-even}
\end{figure}

\begin{figure}[!ht]
\centering
\includegraphics[scale=0.8]{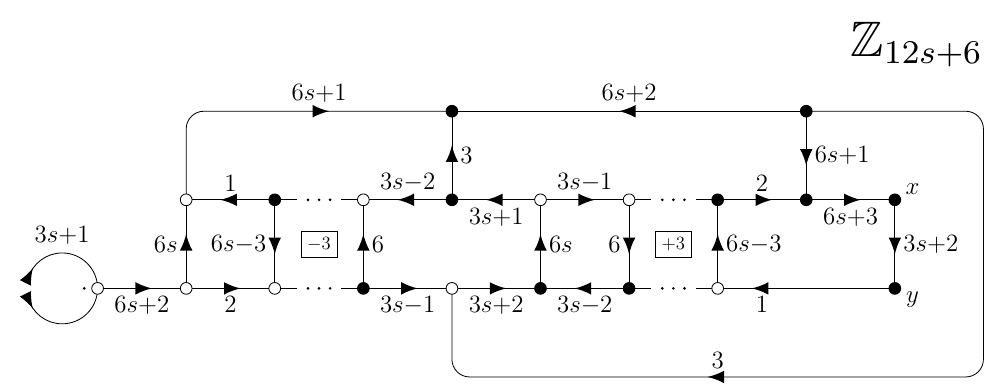}
\caption{$K_{12s+8}-K_2$, odd $s \geq 1$.}
\label{fig-c8-odd}
\end{figure}

\subsection{$K_{12s+2}-K_2$}

Jungerman~\cite{Jungerman-KnK2} gave the first complete index 2 solution for the $n \equiv 2 \pmod{12}$ case of the Map Color Theorem using his namesake ladders. Two simpler families, as seen in Figures \ref{fig-c2-odd} and \ref{fig-c2-even}, can be drawn without crossings. Despite discovering the current graphs in Figure \ref{fig-c2-odd}, Jungerman did not publish it, perhaps in favor of a more unified solution.

\begin{figure}[!ht]
\centering
\includegraphics[scale=0.8]{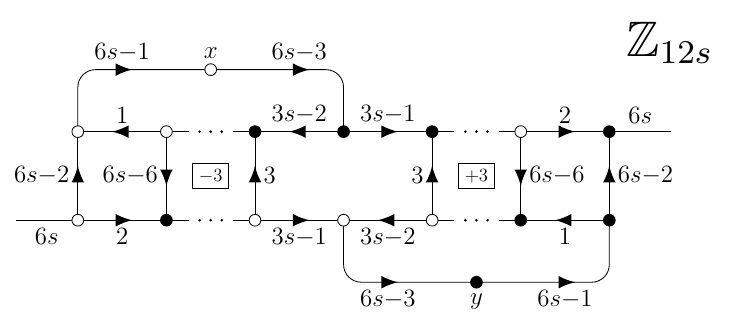}
\caption{$K_{12s+2}-K_2$, odd $s \geq 1$.}
\label{fig-c2-odd}
\end{figure}

\begin{figure}[!ht]
\centering
\includegraphics[scale=0.8]{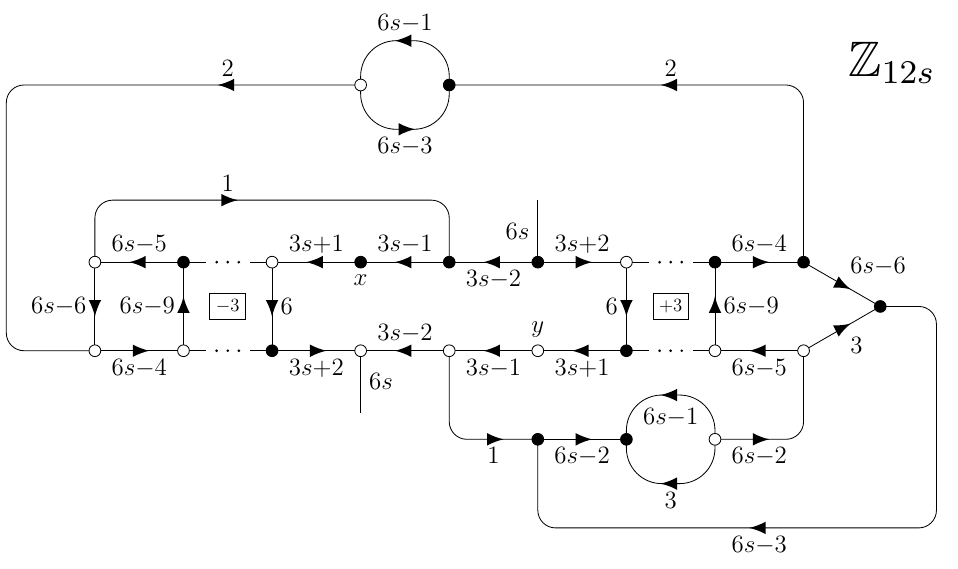}
\caption{$K_{12s+2}-K_2$, even $s \geq 2$.}
\label{fig-c2-even}
\end{figure}

\subsection{$B_{12s+1}$}

We generalize the current graph in Figure \ref{fig-k13} to all balanced split-complete graphs of the form $B_{12s+1}$ for all $s \geq 1$ using the two families in Figures \ref{fig-sc1-even} and \ref{fig-sc1-odd}. As a side note, the resulting minimum genus embeddings of $K_{12s+1}$ form an alternate proof of part of the main result of Sun~\cite{Sun-FaceDist}, since they each have one nontriangular face. Unlike the previous constructions, the increment in the rungs of the Jungerman ladders is 1 to accommodate for the vortex of type (T2): the order 3 element $4s \in \mathbb{Z}_{12s}$ would normally be inside of the Jungerman ladder if the step size were 3.

\begin{figure}[!ht]
\centering
\includegraphics[scale=0.8]{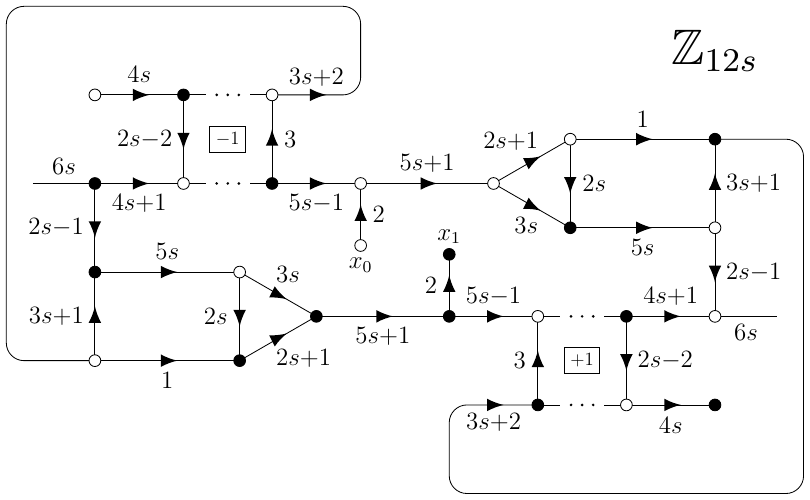}
\caption{$B_{12s+1}$, even $s \geq 2$.}
\label{fig-sc1-even}
\end{figure}

\begin{figure}[!ht]
\centering
\includegraphics[scale=0.8]{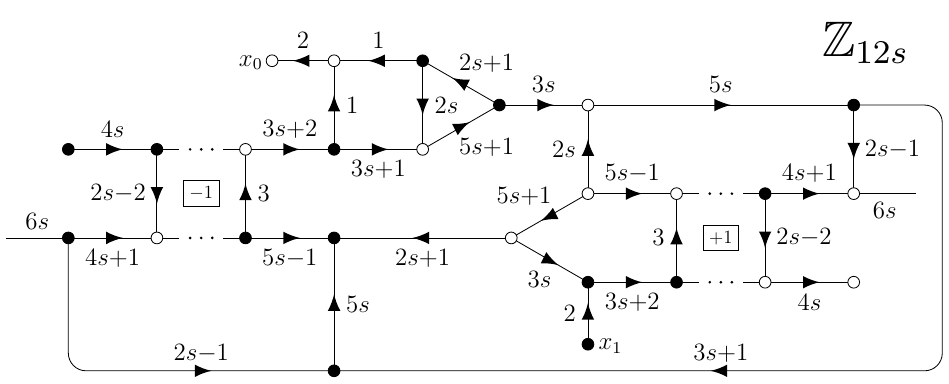}
\caption{$B_{12s+1}$, odd $s \geq 3.$}
\label{fig-sc1-odd}
\end{figure}

\section{Concluding Remarks}

We presented several families of index 2 current graphs and orientable cascades, the most important of which was for the octahedral graphs, confirming a conjecture of Jungerman and Ringel~\cite{JungermanRingel-Octa} that was open for over forty years. Plummer and Zha~\cite{PlummerZha-Connectivity, PlummerZha-Connectivity2} were interested in relationships between genus and connectivity, singling out the genus of these octahedral graphs as a special case that was still open. Besides the octahedral graphs, we may also ask about the genus of all graphs on $n$ vertices with minimum degree $n-2$, i.e., graphs where we delete a (not necessarily perfect) matching from a complete graph. Little is known for this family of graphs, even when the matching is of size 3 or 4. In Appendix~\ref{app-sporadic}, we found an orientable triangular embedding of $K_{15}-6K_2$ via computer search. This embedding and that of $K_{12}-C_{12}$ settles two missing cases in a characterization of Plummer and Zha~\cite[Thm 2.4]{PlummerZha-Connectivity} on the ``$g$-unique'' complete graphs:

\begin{proposition}
$K_{10}$ and $K_{12}-C_{12}$ are both graphs with vertex connectivity 9 and genus 4. $K_{14}$ and $K_{15}-6K_2$ are both graphs with vertex connectivity $13$ and genus $10$. Thus $K_{10}$ and $K_{14}$ are not unique with respect to these parameters. 
\end{proposition}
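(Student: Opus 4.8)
The plan is to dispatch the genus and the vertex connectivity of each of the four graphs separately, and then close with the trivial observation that the paired graphs cannot be isomorphic. For the two complete graphs, both parameters are immediate: $K_n$ is $(n-1)$-connected, so $\kappa(K_{10}) = 9$ and $\kappa(K_{14}) = 13$, and the Map Color Theorem formula $\gamma(K_n) = \lceil (n-3)(n-4)/12 \rceil$ from Section~2 gives $\gamma(K_{10}) = \lceil 42/12 \rceil = 4$ and $\gamma(K_{14}) = \lceil 110/12 \rceil = 10$. So all the work concerns $K_{12}-C_{12}$ and $K_{15}-6K_2$.

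For the genus of the two non-complete graphs, I would invoke the triangular embeddings supplied in Appendix~\ref{app-sporadic}, since a triangular embedding meets the Euler lower bound with equality and thus determines the genus exactly. For $K_{12}-C_{12}$ this is the $s=1$ sporadic embedding, and the Euler lower bound $\lceil (n^2-9n+12)/12 \rceil$ at $n=12$ equals $4$, so $\gamma(K_{12}-C_{12}) = 4$. For $K_{15}-6K_2$, which has $\binom{15}{2}-6 = 99$ edges, the orientable triangular embedding found by exhaustive search forces $\gamma = \lceil (99 - 45 + 6)/6 \rceil = 10$. Hence both genus values follow once the appendix embeddings are exhibited.

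For the connectivities I would run short Menger arguments over the nonadjacent pairs. In $K_{15}-6K_2$ the only nonadjacent pairs are the six matched pairs $\{u,v\}$, and for each such pair all $13$ remaining vertices are common neighbors, giving $13$ internally disjoint length-$2$ paths; since the matched vertices have degree $13$, this pins down $\kappa = 13$. For $K_{12}-C_{12} \cong C(12,\{2,\dots,6\})$ the nonadjacent pairs are the consecutive pairs, and by vertex-transitivity it suffices to treat $\{0,1\}$: the eight common neighbors $\{3,\dots,10\}$ give eight disjoint length-$2$ paths, and the path $0 - 2 - 11 - 1$ supplies a ninth using the two remaining vertices, so $\kappa = 9$, matching the $9$-regularity. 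Finally, $K_{10} \not\cong K_{12}-C_{12}$ and $K_{14} \not\cong K_{15}-6K_2$ since the members of each pair have different numbers of vertices (and one is complete, the other is not), so each pair exhibits two distinct graphs with the same connectivity and genus.

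The genuine content, and therefore the main obstacle, lies entirely in the two triangular embeddings deferred to the appendix --- above all the $K_{15}-6K_2$ embedding obtained by computer search --- since the remainder of the argument reduces to a handful of elementary Menger counts and ceiling arithmetic. Everything hinges on those embeddings actually existing and being correct; once they are verified, the proposition follows by assembling the pieces above.
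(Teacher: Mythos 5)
Your proposal is correct and follows essentially the same route as the paper, which establishes the genus values via the triangular embeddings of $K_{12}-C_{12}$ and $K_{15}-6K_2$ in Appendix~B (meeting the Euler lower bound) together with the Map Color Theorem for $K_{10}$ and $K_{14}$, and treats the connectivity computations as immediate. Your explicit Menger-type arguments for the two non-complete graphs simply fill in details the paper leaves unstated.
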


An independent proof of this fact, using other graphs, is also given by Bokal \emph{et al.}\ \cite{Bokal-Dual}.

\bibliographystyle{alpha}
\bibliography{biblio}

\begin{thebibliography}{BGG{\v{S}}00}

\bibitem[Arc95]{ArchdeaconWhite}
Dan Archdeacon.
\newblock {The Genus of a Complete Graph Minus a Hamiltonian Cycle}.
\newblock
  \url{http://www.cems.uvm.edu/TopologicalGraphTheoryProblems/genkncn.htm},
  1995.
\newblock Accessed: 2018-08-14.

\bibitem[BBZ22]{Bokal-Dual}
Drago Bokal, Gunnar Brinkmann, and Carol~T Zamfirescu.
\newblock The connectivity of the dual.
\newblock {\em Journal of Graph Theory}, 101(2):182--209, 2022.

\bibitem[BC11]{BernardiChapuy}
Olivier Bernardi and Guillaume Chapuy.
\newblock Counting unicellular maps on non-orientable surfaces.
\newblock {\em Advances in Applied Mathematics}, 47(2):259--275, 2011.

\bibitem[BGG{\v{S}}00]{Bonnington-Exponential}
C.~Paul Bonnington, Mike~J. Grannell, Terry~S. Griggs, and Jozef
  {\v{S}}ir{\'a}{\v{n}}.
\newblock Exponential families of non-isomorphic triangulations of complete
  graphs.
\newblock {\em Journal of Combinatorial Theory, Series B}, 78(2):169--184,
  2000.

\bibitem[ES14]{EllinghamSchroeder-Ham2}
Mark~N. Ellingham and Justin~Z. Schroeder.
\newblock Orientable hamilton cycle embeddings of complete tripartite graphs
  {II}: voltage graph constructions and applications.
\newblock {\em Journal of Graph Theory}, 77(3):219--236, 2014.

\bibitem[GA73]{GrossAlpert}
Jonathan~L. Gross and Seth~R. Alpert.
\newblock Branched coverings of graph imbeddings.
\newblock {\em Bulletin of the American Mathematical Society}, 79(5):942--945,
  1973.

\bibitem[GG{\v{S}}98]{Grannell-SurfaceEmbeddings}
M.J. Grannell, T.S. Griggs, and Jozef {\v{S}}ir{\'a}{\v{n}}.
\newblock Surface embeddings of {S}teiner triple systems.
\newblock {\em Journal of Combinatorial Designs}, 6(5):325--336, 1998.

\bibitem[GR{\v{S}}07]{Goddyn-Exponential}
Luis Goddyn, R.~Bruce Richter, and Jozef {\v{S}}ir{\'a}{\v{n}}.
\newblock Triangular embeddings of complete graphs from graceful labellings of
  paths.
\newblock {\em Journal of Combinatorial Theory, Series B}, 97(6):964--970,
  2007.

\bibitem[GT87]{GrossTucker}
Jonathan~L. Gross and Thomas~W. Tucker.
\newblock {\em {Topological Graph Theory}}.
\newblock John Wiley \& Sons, 1987.

\bibitem[JR78]{JungermanRingel-Octa}
Mark Jungerman and Gerhard Ringel.
\newblock The genus of the $n$-octahedron: {Regular cases}.
\newblock {\em Journal of Graph Theory}, 2(1):69--75, 1978.

\bibitem[JR80]{JungermanRingel-Minimal}
Mark Jungerman and Gerhard Ringel.
\newblock Minimal triangulations on orientable surfaces.
\newblock {\em Acta Mathematica}, 145(1):121--154, 1980.

\bibitem[Jun74]{Jungerman-K18}
Mark Jungerman.
\newblock {Orientable triangular embeddings of $K_{18}-K_3$ and $K_{13}-K_3$}.
\newblock {\em Journal of Combinatorial Theory, Series B}, 16(3):293--294,
  1974.

\bibitem[Jun75a]{Jungerman-Quadripartite}
M.~Jungerman.
\newblock The genus of the symmetric quadripartite graph.
\newblock {\em Journal of Combinatorial Theory, Series B}, 19(2):181--187,
  1975.

\bibitem[Jun75b]{Jungerman-KnK2}
Mark Jungerman.
\newblock The genus of {$K_n-K_2$}.
\newblock {\em Journal of Combinatorial Theory, Series B}, 18(1):53--58, 1975.

\bibitem[Jun75c]{Jungerman-OrientableCascades}
Mark~A. Jungerman.
\newblock {\em Orientable cascades and related embedding techniques}.
\newblock PhD thesis, University of California, Santa Cruz, 1975.

\bibitem[Kor95]{Korzhik-Case8}
Vladimir~P. Korzhik.
\newblock {A nonorientable triangular embedding of $K_n-K_2$, $n\equiv 8
  \pmod{12}$}.
\newblock {\em {Discrete Mathematics}}, 141(1-3):195--211, 1995.

\bibitem[Kor08]{Korzhik-Index4}
Vladimir~P. Korzhik.
\newblock Exponentially many nonisomorphic orientable triangular embeddings of
  {$K_{12s}$}.
\newblock {\em Discrete Mathematics}, 308(7):1046--1071, 2008.

\bibitem[May69]{Mayer-Orientables}
Jean Mayer.
\newblock Le probleme des r{\'e}gions voisines sur les surfaces closes
  orientables.
\newblock {\em Journal of Combinatorial Theory}, 6(2):177--195, 1969.

\bibitem[PJ79]{Pengelley-Index4}
David~J. Pengelley and M.~Jungerman.
\newblock Index four orientable embeddings and case zero of the {Heawood}
  conjecture.
\newblock {\em Journal of Combinatorial Theory, Series B}, 26(2):131--144,
  1979.

\bibitem[PZ98]{PlummerZha-Connectivity}
Michael~D. Plummer and Xiaoya Zha.
\newblock On the connectivity of graphs embedded in surfaces.
\newblock {\em Journal of Combinatorial Theory, Series B}, 72(2):208--228,
  1998.

\bibitem[PZ02]{PlummerZha-Connectivity2}
Michael~D. Plummer and Xiaoya Zha.
\newblock On the connectivity of graphs embedded in surfaces ii.
\newblock {\em The Electronic Journal of Combinatorics}, 9(1):38, 2002.

\bibitem[Rin55]{Ringel-K8}
Gerhard Ringel.
\newblock {Wie man die geschlossenen nichtorientierbaren Fl{\"a}chen in
  m{\"o}glichst wenig Dreiecke zerlegen kann}.
\newblock {\em Mathematische Annalen}, 130(4):317--326, 1955.

\bibitem[Rin74]{Ringel-MapColor}
Gerhard Ringel.
\newblock {\em {Map Color Theorem}}, volume 209.
\newblock Springer Science \& Business Media, 1974.

\bibitem[RY69]{RingelYoungs-German}
Gerhard Ringel and J.W.T. Youngs.
\newblock {L{\"o}sung des Problems der Nachbargebiete}.
\newblock {\em Archiv der Mathematik}, 20(2):190--201, 1969.

\bibitem[Sun21]{Sun-FaceDist}
Timothy Sun.
\newblock Face distributions of embeddings of complete graphs.
\newblock {\em Journal of Graph Theory}, 97(2):281--304, 2021.

\end{thebibliography}

\appendix

\section{Embeddings of some sporadic cases}\label{app-sporadic}

All the embeddings presented here are orientable and triangular. To help with verifying the latter, we note that some of these embeddings are symmetric, i.e., of some index less than the number of vertices. 

The following embedding of $K_{10}-3K_2$ was found by Jonathan L.\ Gross (personal communication) by modifying the standard triangular embedding of $K_{10}-K_3$:
{\small$$\begin{array}{rrrrrrrrrrrrrrrrr}
0. & 1 & 2 & x & y & 5 & 4 & z & 6 & 3 \\
1. & 2 & 0 & 3 & y & 6 & 5 & z & 4 \\
2. & 3 & x & 0 & 1 & 4 & 6 & y & z & 5 \\
3. & 4 & x & 2 & 5 & y & 1 & 0 & 6 \\
4. & 5 & x & 3 & 6 & 2 & 1 & z & 0 \\
5. & 6 & x & 4 & 0 & y & 3 & 2 & z & 1 \\
6. & x & 5 & 1 & y & 2 & 4 & 3 & 0 & z \\
x. & 0 & 2 & 3 & 4 & 5 & 6 & z & y \\
y. & 0 & x & z & 2 & 6 & 1 & 3 & 5 \\
z. & 0 & 4 & 1 & 5 & 2 & y & x & 6 \\
\end{array}$$}

This embedding of $K_{12}-C_{12}$ is derived from the index 4 current graph in Figure~\ref{fig-h12}: 
{\small$$\begin{array}{rrrrrrrrrrrrrrrrr}
0. & 8 & 5 & 9 & 4 & 2 & 6 & 3 & 7 & 10 \\
1. & 5 & 8 & 4 & 9 & 3 & 10 & 6 & 11 & 7 \\
2. & 6 & 0 & 4 & 11 & 8 & 10 & 5 & 7 & 9 \\
3. & 7 & 0 & 6 & 8 & 11 & 5 & 10 & 1 & 9 \\
4. & 0 & 9 & 1 & 8 & 6 & 10 & 7 & 11 & 2 \\
5. & 9 & 0 & 8 & 1 & 7 & 2 & 10 & 3 & 11 \\
6. & 10 & 4 & 8 & 3 & 0 & 2 & 9 & 11 & 1 \\
7. & 11 & 4 & 10 & 0 & 3 & 9 & 2 & 5 & 1 \\
8. & 4 & 1 & 5 & 0 & 10 & 2 & 11 & 3 & 6 \\
9. & 1 & 4 & 0 & 5 & 11 & 6 & 2 & 7 & 3 \\
10. & 2 & 8 & 0 & 7 & 4 & 6 & 1 & 3 & 5 \\
11. & 3 & 8 & 2 & 4 & 7 & 1 & 6 & 9 & 5
\end{array}$$}
For more information on index 4 current graphs, see Pengelley and Jungerman~\cite{Pengelley-Index4} and Korzhik~\cite{Korzhik-Index4}. 

\begin{figure}[!ht]
\centering
\includegraphics[scale=0.8]{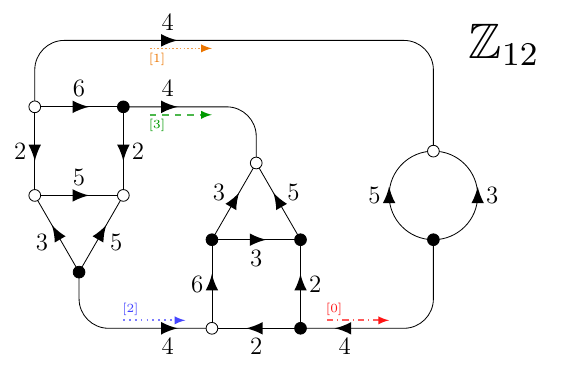}
\caption{An index 4 current graph generating a triangular embedding of $K_{12}-C_{12}$.}
\label{fig-h12}
\end{figure}

In addition to $K_{10}-K_3$, we need triangular embeddings of $O_{12}+\overline{K_4}$ and $O_{18}+\overline{K_4}$:

{\small$$\begin{array}{rrrrrrrrrrrrrrrrr}
0. & 1 & 2 & w & 3 & x & 4 & 5 & 7 & 8 & y & 9 & z & 10 & 11 \\
1. & 0 & 11 & 4 & 9 & 10 & 3 & w & 5 & y & 8 & z & 6 & x & 2 \\
2. & 0 & 1 & x & 7 & 6 & 4 & y & 5 & z & 9 & 11 & 3 & 10 & w \\
3. & 0 & w & 1 & 10 & 2 & 11 & y & 4 & z & 8 & 6 & 7 & 5 & x \\
4. & 0 & x & 8 & 7 & w & 9 & 1 & 11 & z & 3 & y & 2 & 6 & 5 \\
5. & 0 & 4 & 6 & z & 2 & y & 1 & w & 8 & 10 & 9 & x & 3 & 7 \\
6. & 1 & z & 5 & 4 & 2 & 7 & 3 & 8 & 9 & w & 10 & y & 11 & x \\
7. & 0 & 5 & 3 & 6 & 2 & x & 9 & y & 10 & z & 11 & w & 4 & 8 \\
8. & 0 & 7 & 4 & x & 10 & 5 & w & 11 & 9 & 6 & 3 & z & 1 & y \\
9. & 0 & y & 7 & x & 5 & 10 & 1 & 4 & w & 6 & 8 & 11 & 2 & z \\
10. & 0 & z & 7 & y & 6 & w & 2 & 3 & 1 & 9 & 5 & 8 & x & 11 \\
11. & 0 & 10 & x & 6 & y & 3 & 2 & 9 & 8 & w & 7 & z & 4 & 1 \\
w. & 0 & 2 & 10 & 6 & 9 & 4 & 7 & 11 & 8 & 5 & 1 & 3 \\
x. & 0 & 3 & 5 & 9 & 7 & 2 & 1 & 6 & 11 & 10 & 8 & 4 \\
y. & 0 & 8 & 1 & 5 & 2 & 4 & 3 & 11 & 6 & 10 & 7 & 9 \\
z. & 0 & 9 & 2 & 5 & 6 & 1 & 8 & 3 & 4 & 11 & 7 & 10 \\
\end{array}$$}

{\small$$\begin{array}{rrrrrrrrrrrrrrrrrrrrrrrrrr}
0. & 1 & 5 & 12 & 10 & 2 & 14 & 16 & 6 & 11 & y & 15 & z & 17 & 4 & 8 & 13 & 7 & w & 3 & x \\
1. & 0 & x & 16 & w & 12 & 7 & 3 & 11 & 17 & 15 & 13 & 6 & 14 & 2 & z & 4 & y & 8 & 9 & 5 \\
2. & 0 & 10 & 4 & 6 & 8 & 7 & 12 & 16 & 15 & 9 & w & 5 & x & 3 & 13 & y & 17 & z & 1 & 14 \\
3. & 0 & w & 14 & 9 & 11 & 1 & 7 & 5 & 15 & 8 & 4 & z & 6 & y & 10 & 16 & 17 & 13 & 2 & x \\
4. & 0 & 17 & 11 & w & 7 & x & 5 & 9 & 16 & 14 & 12 & 6 & 2 & 10 & 15 & y & 1 & z & 3 & 8 \\
5. & 0 & 1 & 9 & 4 & x & 2 & w & 16 & 11 & 13 & 15 & 3 & 7 & 17 & 10 & 6 & z & 8 & y & 12 \\
6. & 7 & 11 & 0 & 16 & 8 & 2 & 4 & 12 & 17 & y & 3 & z & 5 & 10 & 14 & 1 & 13 & w & 9 & x \\
7. & 6 & x & 4 & w & 0 & 13 & 9 & 17 & 5 & 3 & 1 & 12 & 2 & 8 & z & 10 & y & 14 & 15 & 11 \\
8. & 6 & 16 & 10 & 12 & 14 & 13 & 0 & 4 & 3 & 15 & w & 11 & x & 9 & 1 & y & 5 & z & 7 & 2 \\
9. & 6 & w & 2 & 15 & 17 & 7 & 13 & 11 & 3 & 14 & 10 & z & 12 & y & 16 & 4 & 5 & 1 & 8 & x \\
10. & 6 & 5 & 17 & w & 13 & x & 11 & 15 & 4 & 2 & 0 & 12 & 8 & 16 & 3 & y & 7 & z & 9 & 14 \\
11. & 6 & 7 & 15 & 10 & x & 8 & w & 4 & 17 & 1 & 3 & 9 & 13 & 5 & 16 & 12 & z & 14 & y & 0 \\
12. & 13 & 17 & 6 & 4 & 14 & 8 & 10 & 0 & 5 & y & 9 & z & 11 & 16 & 2 & 7 & 1 & w & 15 & x \\
13. & 12 & x & 10 & w & 6 & 1 & 15 & 5 & 11 & 9 & 7 & 0 & 8 & 14 & z & 16 & y & 2 & 3 & 17 \\
14. & 12 & 4 & 16 & 0 & 2 & 1 & 6 & 10 & 9 & 3 & w & 17 & x & 15 & 7 & y & 11 & z & 13 & 8 \\
15. & 12 & w & 8 & 3 & 5 & 13 & 1 & 17 & 9 & 2 & 16 & z & 0 & y & 4 & 10 & 11 & 7 & 14 & x \\
16. & 12 & 11 & 5 & w & 1 & x & 17 & 3 & 10 & 8 & 6 & 0 & 14 & 4 & 9 & y & 13 & z & 15 & 2 \\
17. & 12 & 13 & 3 & 16 & x & 14 & w & 10 & 5 & 7 & 9 & 15 & 1 & 11 & 4 & 0 & z & 2 & y & 6 \\
w. & 0 & 7 & 4 & 11 & 8 & 15 & 12 & 1 & 16 & 5 & 2 & 9 & 6 & 13 & 10 & 17 & 14 & 3 \\
x. & 0 & 3 & 2 & 5 & 4 & 7 & 6 & 9 & 8 & 11 & 10 & 13 & 12 & 15 & 14 & 17 & 16 & 1 \\
y. & 0 & 11 & 14 & 7 & 10 & 3 & 6 & 17 & 2 & 13 & 16 & 9 & 12 & 5 & 8 & 1 & 4 & 15 \\
z. & 0 & 15 & 16 & 13 & 14 & 11 & 12 & 9 & 10 & 7 & 8 & 5 & 6 & 3 & 4 & 1 & 2 & 17 \\
\end{array}$$}

Finally, the following rotation system is of $K_{15}-6K_2$:

{\small$$\begin{array}{rrrrrrrrrrrrrrrrrrrr}
0. & 1 & 9 & 14 & 10 & 6 & 12 & 7 & 13 & 3 & 11 & 2 & 4 & 8 & 5 \\
1. & 0 & 5 & 14 & 11 & 10 & 4 & 13 & 8 & 12 & 6 & 3 & 7 & 9 \\
2. & 0 & 11 & 8 & 14 & 13 & 5 & 7 & 3 & 9 & 6 & 10 & 12 & 4 \\
3. & 0 & 13 & 14 & 12 & 8 & 10 & 5 & 9 & 2 & 7 & 1 & 6 & 11 \\
4. & 0 & 2 & 12 & 5 & 11 & 7 & 14 & 6 & 9 & 13 & 1 & 10 & 8 \\
5. & 0 & 8 & 9 & 3 & 10 & 7 & 2 & 13 & 11 & 4 & 12 & 14 & 1 \\
6. & 0 & 10 & 2 & 9 & 4 & 14 & 8 & 13 & 7 & 11 & 3 & 1 & 12 \\
7. & 0 & 12 & 9 & 1 & 3 & 2 & 5 & 10 & 14 & 4 & 11 & 6 & 13 \\
8. & 0 & 4 & 10 & 3 & 12 & 1 & 13 & 6 & 14 & 2 & 11 & 9 & 5 \\
9. & 0 & 1 & 7 & 12 & 13 & 4 & 6 & 2 & 3 & 5 & 8 & 11 & 14 \\
10. & 0 & 14 & 7 & 5 & 3 & 8 & 4 & 1 & 11 & 13 & 12 & 2 & 6 \\
11. & 0 & 3 & 6 & 7 & 4 & 5 & 13 & 10 & 1 & 14 & 9 & 8 & 2 \\
12. & 0 & 6 & 1 & 8 & 3 & 14 & 5 & 4 & 2 & 10 & 13 & 9 & 7 \\
13. & 0 & 7 & 6 & 8 & 1 & 4 & 9 & 12 & 10 & 11 & 5 & 2 & 14 & 3 \\ 
14. & 0 & 9 & 11 & 1 & 5 & 12 & 3 & 13 & 2 & 8 & 6 & 4 & 7 & 10 \\
\end{array}$$}

\section{A genus embedding of $K_{18}$}\label{app-k18}

As mentioned earlier, Jungerman and Ringel~\cite{JungermanRingel-Octa} found triangular embeddings for some of the octahedral graphs, including the one on $18$ vertices. Remarkably, their orientable cascade for that case, depicted in Figure~\ref{fig-current-k18}, can be augmented with two handles that allow us to add the nine missing edges. The log of the current graph is of the form
$$\begin{array}{rrrrrrrrrrrrrrrrr}
\lbrack0\rbrack. & \dots & 12 & 6 & \dots & 16 & 3 & 7 & \dots
\end{array}$$
so there are faces of the form $[0, 6, 12]$ and $[3, 15, 9]$. Merging these faces with a handle and performing some edge flips adds six of the missing edges, as in Figure~\ref{fig-current-k18-add}. The same handle operation allows us to add the remaining missing edges $(2, 11), (5, 14), (8, 17)$, except after incrementing all the values in the diagram by $2$ and omitting the edge flips.  

\begin{figure}[!ht]
\centering
\includegraphics[scale=0.8]{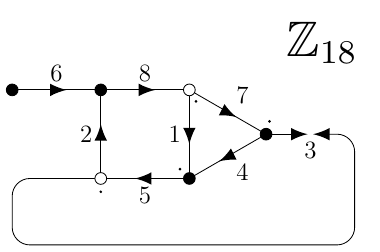}
\caption{Jungerman and Ringel's orientable cascade for $O_{18}$.}
\label{fig-current-k18}
\end{figure}

\begin{figure}[!ht]
\centering
\includegraphics[scale=0.8]{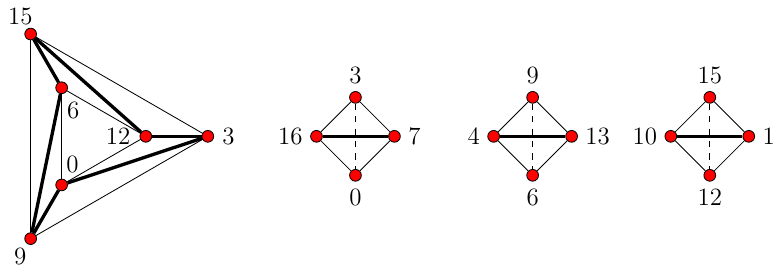}
\caption{A handle and edge flips for adding six missing edges.}
\label{fig-current-k18-add}
\end{figure}

Not only is this the first current graph-based construction of a genus embedding of $K_{18}$, but also that of a triangular embedding of $K_{18}$ minus three edges. Previous solutions for these cases~\cite{Mayer-Orientables, Jungerman-K18} were found essentially by trial and error, either by hand or by computer search.

\end{document}